\documentclass[12pt,letterpaper]{amsart}

\usepackage[utf8]{inputenc}

\usepackage[all]{xypic}


\textwidth=18truecm
\textheight=22truecm
\topmargin=0pt
\oddsidemargin=-20pt
\evensidemargin=-20pt


\theoremstyle{definition}
\newtheorem{df}{Definition}
\theoremstyle{plain}
\newtheorem{stat}{Statement}

\newtheorem{thm}{Theorem}

\newtheorem{cor}{Corollary}
\theoremstyle{remark}
\newtheorem{ex}{Example}
\newtheorem{rem}{Remark}

\def\rest#1#2{\left. #1 \right|_{#2}}
\author{F. A. Arias and M. Malakhaltsev}
\address{Universidad de los Andes, Bogotá, Colombia}
\email{fa.arias44@uniandes.edu.co; mikarm@uniandes.edu.co}

\title{A generalization of the Gauss-Bonnet and Hopf-Poincaré theorems. Part II}

\thanks{This investigation was supported by Vicerrectoría de Investigaciones and the Faculty of Sciences of Universidad de los Andes}

\begin{document}

\begin{abstract}
This paper is a continuation of \cite{arias_malakhaltsev2015}.
Let $\pi : E \to M$ be a locally trivial fiber bundle over a two-dimensional manifold $M$, and $\Sigma \subset M$ be a discrete subset.
A subset $Q \subset E$ is called a \emph{$n$-sheeted branched section of the bundle $\pi$} if $Q' = \pi^{-1}(M \setminus \Sigma) \cap Q$ is a $n$-sheeted covering of $M \setminus \Sigma$.
The set $\Sigma$ is called the \emph{singularity set} of the branched section $Q$.
We define the index of a singularity point of a branched section, and give examples of its calculation, in particular for branched sections of the projective tangent bundle of $M$ determined by binary differential equations.
Also we define a resolution of singularities of a branched section, and prove an analog of Hopf-Poincar\'e-Gauss-Bonnet theorem for the branched sections admitting a resolution.
\end{abstract}

\maketitle

\subjclass{53C10, 53S35}

\keywords{branched section, index of singular point, binary differential equations, curvature}

\section{Introduction}
\label{sec:1}

Let us recall that a branched covering is a smooth map $f : X \to Y$, where $X$ and $Y$ are compact $n$-dimensional manifolds, such that $df_x : T_x X \to T_{f(x)}Y$ is an isomorphism for all points $x \in X \setminus A$ for some subset $A \subset X$ of dimension less or equal to $n - 2$.
In this case, if $X' = X \setminus f^{-1}(f(A))$ and $Y' = Y \setminus f(A)$, then the induced map $f' : X' \to Y'$ is a finite-sheeted covering map.
The points of the set $f(A)$ are called the \emph{branch points} of the branch covering $f$
(\cite{dubrovin_novikov_fomenko1985}, Section~18.3).

Now let $\xi = \left\{ \pi_E : E \to M \right\}$ be a fiber bundle.
Let $\Sigma$ be a closed subset of $M$, $M' = M \setminus \Sigma$, and $E' = \pi^{-1}(M')$.
\begin{df}
An \emph{$n$-sheeted branched section} of the bundle $\xi$ is a subset $Q \subset E$ such that $Q' = Q \cap E'$ is an embedded submanifold of $E$ and $\rest{\pi_E}{Q'} : Q' \to M'$ is a $n$-sheeted covering.
The set $\Sigma$ is called the \emph{singularity set} of the branched section $Q$.
\label{df:1}
\end{df}
\begin{ex}
Let $V$ be a section of the tangent bundle $\pi_{TN} : TN \to N$, and $f : N \to M$ be a $k$-sheeted covering, then 
we can construct a branched section $df(V)$ of the tangent bundle  $\pi_{TM} : TM \to M$ in the following way. 
Let us consider the subset $Q = \left\{ df_y(V(y)) \mid y \in N \right\} \subset TM$.
For each $x \in M$, let us set $\mathcal{V}(x) = \left\{ df_y(V(y)) \mid y \in f^{-1}(x) \right\} \subset T_x M$.
Take the subset $\Sigma \subset M$ consisting of points $x \in M$ such that the number of elements of the set $\mathcal{V}(x)$ is less than $k$.
Then $M' = M \setminus \Sigma$ is open, $Q' = Q \cap \pi^{-1}_{TM}(M')$ is a submanifold of $TM$ and $f$ induces a $k$-sheeted covering $f' : Q' \to M'$.
Indeed, for each $x \in M'$ there exists a neighborhood $U \subset M'$ of $x$ such that $f^{-1}(U) = \mathop\sqcup\limits_{j=1}^k \widetilde{U}_j \subset N$ and, for each $j = \overline{1, k}$, the application $f_j = \rest{f}{\widetilde{U}_j} : \widetilde{U}_j \to U$ is a diffeomorphism.
Therefore $df_j : T\widetilde{U}_j \subset TN \to TU \subset TM$ is also a diffeomorphism. 
As $V : \widetilde{U}_j \to V(\widetilde{U}_j) \subset TN$ is a diffeomorphism onto its image,
the map $\theta_j = df_j \circ V \circ f_j^{-1} : U \to df_j (V(\widetilde{U}_j)) \subset Q \subset TM$, $j = \overline{1, k}$, is a diffeomorphism onto its image, as well. 
Note that, for each $y \in U \subset M'$, we have that the set $f^{-1}(y) = \left\{ p_j \in \widetilde{U}_j \right\}_{j = \overline{1, k}}$ consists of $k$ distinct points, and the set $\left\{ df_{p_j}(V(p_j)) \right\}_{j = \overline{1, k}}$ consists of $k$ distinct vectors, by the definition of $M'$.  Therefore, $\theta_i (U) \cap \theta_j(U) = \emptyset$, for $i \ne j$. 
Thus $\pi_{TM}^{-1}(U) \cap Q' = \mathop\sqcup\limits_{j=1}^k \theta_j(U)$, this means that $U$ is simply covered in $Q'$, and $Q'$ is a $k$-sheeted covering of $M'$.
\label{ex:1}
\end{ex}

The branched sections naturally appear in the theory of differential equations over manifolds.
Our main example in this paper is the following one.
\begin{ex}
Let $M$ be a connected compact oriented manifold and let $\omega$ be a symmetric tensor of order $n$ over $M$. Recall that such a tensor can be written locally as follows
\begin{equation}
\omega_{(x, y)}=a_0(x,y)dx^n+a_1(x, y)dx^{n-1}dy+\cdots +a_n(x, y)dy^n,
\label{eq:22}
\end{equation}
where $(x, y)$ are coordinate functions on an open set $U\subset M$, and $a_i: U\to \mathbb{R}$ are smooth functions defined in $U$.
In what follows, we suppose that $\omega$ has the following properties:
\begin{enumerate}
\item The function $\omega_{(x,y)}$ is identically zero if and only if $a_i(x, y)=0$ for $0\leq i\leq n$. We set $\Sigma=\{p\in M : \omega_p=0\}$.
\item On $M\setminus \Sigma$, the tensor $\omega$ has the form $\omega=\lambda_1\lambda_2\cdots\lambda_n$, where $\lambda_i\in \Omega(M\setminus \Sigma)$ pairwise linearly independent.
\end{enumerate}
\begin{stat}
The $n$-form $\omega$ determines a branched section of the bundle $\pi: PTM\to M$
\label{stat:6}
\end{stat}
\begin{proof}
Let $Q$ be the solution on $PTM$ of the equation \eqref{eq:22}.
We will prove that $Q$ is a branched section of $\pi$.
Let $E'=\pi^{-1}(M\setminus \Sigma)$ and $Q'=Q\cap E'$.
It follows from the property (2) that the set $F_p=Q\cap \pi^{-1}(p), \ p\in M\setminus \Sigma$ has exactly $n$ elements, therefore each fiber of the surjective map $\pi':=\pi|_{Q'}: Q'\to M\setminus \Sigma$ is finite with $n$ elements.
On the other hand, if $\varphi: \pi^{-1}(U)\to U\times \mathbb{R}P^1$ is a trivialization of $PTM$ on $U$, then the restriction $\varphi':=\varphi|_{\pi^{-1}(U)\cap Q'}: \pi^{-1}(U)\cap Q' \to U\times \mathbb{R}P^1$ is a homeomorphism on its image.
Since $\pi|_{\pi^{-1}(U)\cap Q'}:\pi^{-1}(U)\cap Q'\to U\cap (M\setminus \Sigma)$ has finite fiber with $n$ elements over each point $p\in U\cap (M\setminus \Sigma)$, from the following commutative diagram
\begin{equation}
\xymatrix{
&\varphi'(\pi ^{-1} (U\cap (M\setminus \Sigma))\cap Q') \ar[ld]_{pr_1} & \\
U\cap (M\setminus \Sigma) & \pi ^{-1} (U\cap (M\setminus \Sigma))\cap Q'. \ar[l]_{\pi'} \ar[u]_{\varphi'}}
\label{eq:23}
\end{equation}
It follows that $\pi|_{Q'}: Q'\to M\setminus \Sigma$ is a local diffeomorphism.
Therefore, $\pi|_{Q'}: Q'\to M\setminus \Sigma$ is a $n$-sheeted branched covering, and so $Q$ is a branched section of $PTM$.
\end{proof}

Geometrically $Q$ determines an $n$-web at the points of $M \setminus \Sigma$.
\label{ex:2}
\end{ex}

\begin{ex}
Let $\xi = \left\{ \pi : \overline{P} \to M \right\}$ be a $\overline{G}$-principal bundle which reduces to a finite subgroup $G \subset \overline{G}$ over $M \setminus \Sigma$, where $\Sigma \subset M$ is a closed subset.
Then the corresponding $G$-principal bundle $P \subset \overline{P}$ is a branched section of the bundle $\xi$ with singularity set $\Sigma$. 

For example, let $M$ be a two-dimensional oriented Riemannian manifold, and $\overline{P} = SO(M)$, the $SO(2)$-principal bundle of orthonormal positively oriented frames of $M$.
Any finite subgroup $G \subset SO(2)$ is a cyclic group $G \cong \mathbb{Z}_m$ generated by the rotation $R_{2\pi/m}$. 

If $P \subset SO(M') \subset SO(M)$ is a reduction of $SO(M)$ to $G$ over $M' = M \setminus \Sigma$, then at each point $x \in M'$ we have the set $\mathcal{N}(x) = \left\{ e \in T_x M \mid (e, R_{\pi/2}e) \in P  \right\}$, which consists of $m$ unit vectors such that the angle between any two of them is $2\pi l/m$.
The set $\mathcal{N}(x)$ defines a regular $m$-polygon $P_m \subset T_x M'$ inscribed in a unit circle centered at $0 \in T_x M$.

It is clear that, vice versa, if at any point of $M' = M \setminus \Sigma$, we are given a unitary $m$-polygon $P_m \subset T_p M'$ and the field of these polygons is smooth (these means that locally we can choose $m$ unitary vector fields whose values are  the vertices of the polygons $P_m$), then the bundle $SO(M)$ reduces to the subgroup $G \cong \mathbb{Z}_m$ of the Lie group $SO(2)$.

This situation occurs, for example, when $M$ is a surface in $\mathbb{R}^3$, and $\Sigma$ is the set of umbilic points of $M$.
Then at each point of $M'$ we have two orthogonal eigenspaces of the shape operator of the surface,  
which determine a square in $T_p M$ with vertices at points where these eigenspaces meet the unit circle centered at $0 \in T_p M$.  
Therefore, over $M' = M \setminus \Sigma$ the bundle $SO(M)$ reduces to the subgroup $G \cong \mathbb{Z}_4$ generated by the rotation $R_{\pi/2}$.
The corresponding principal subbundle $P$, the branched section of the bundle $SO(M) \to M$, consists of oriented orthogonal frames such that the frame vectors span the eigenspaces.

Moreover, as the difference of the principal curvatures never vanish on $M'$, we can order the principal curvatures in such a way that $k_1(p) > k_2(p)$ at any $p \in M'$. 
Let $L_a(p)$, $a = 1, 2$ be the eigenspace corresponding to the principal curvature $k_a(p)$, $a = 1, 2$. 
Then we can choose the subbundle $P \subset SO(M)$ in such a way that, for $\left\{ e_1, e_2 \right\} \in P$, the vector $e_a$ spans $L_a$, $a = 1, 2$, therefore in this case the bundle $SO(M) \to M$ reduces to the group $G \cong \mathbb{Z}_2$.

Also, note that this example is related to Example~\ref{ex:2}.
Indeed if we have the reduction of $P \subset SO(M)$ to the subgroup $G \cong \mathbb{Z}_m$ over $M'$, then at each point $p \in M'$ we have $m$ (or $m/2$) subspaces spanned by the vector $e_1$ from the frame $\left\{ e_1, e_2 \right\} \in P$.
Then we can take the binary differential equation~\eqref{eq:22} such that these subspaces are the roots of the corresponding algebraic equation.
\label{ex:3}
\end{ex}

The paper is organized as follows. In Section~\ref{sec:2} we define the index of an isolated singular point of a branched section of locally trivial bundle $\xi = \left\{ \pi_E : E \to M \right\}$ over a two-dimensional oriented manifold $M$ (see Definition~\ref{df:2}), this definition generalizes the definition of the index of a singular point of a section from (\cite{arias_malakhaltsev2015}, Section~2.2, Definition~1).
In Section~\ref{sec:3} we define a resolution of a branched section (see Definition~\ref{df:3}), and give various examples of resolutions (see Examples~\ref{ex:5}~--~\ref{ex:8}).
And, finally, in Section~\ref{sec:3} we prove an analogue of the Gauss-Bonnet theorem for a branched section which admits resolution (see Theorem~\ref{thm:1}).

\section{The index of a singular isolated point}
\label{sec:2}

\subsection{Local monodromy group}
\label{subsec:2_1}
Let $M$ be a two-dimensional closed oriented manifold.
Let $\xi = \left\{ \pi_E : E \to M \right\}$ be a fiber bundle with oriented typical fiber $F$.

Let us consider a $k$-sheeted branched section $Q$ of $\xi$ (see Definition~\ref{df:1}) with singularity set $\Sigma$, and let $\pi_Q = \rest{\pi_E}{Q} : Q \to M$.
Recall that $M' = M \setminus \Sigma$, $E' = \pi^{-1}(M')$, and $Q' = Q \cap E'$.

Assume that $x \in \Sigma$ is an isolated point of $\Sigma$.
Let us take a neighborhood $U(x)$ such that $U'(x) = U(x) \setminus \left\{ x \right\}$ is an open subset of $M'$ and
there exists a diffeomorphism $\varphi : (D, 0) \to (U(x), x)$, where $D \subset \mathbb{R}^2$ is the standard open $2$-disk centered at the origin $0 \in \mathbb{R}^2$.
We will call $U(x)$ a \emph{disk neighborhood} of $x$ and assume that $\varphi$ sends the standard orientation of $D$ to the orientation of $U(x)$ induced by the orientation of $M$.

By Definition~\ref{df:1}, the map $\rest{\pi_Q}{\pi_Q^{-1}(U'(x))} : \pi_Q^{-1}(U'(x)) \to U'(x)$ is a $k$-sheeted covering.

If $U(x)$ is a disk neighborhood of an isolated point $x \in \Sigma$, then for each point $y \in U'(x)$, the fundamental group $\Pi_1(y) = \pi_1(U'(x), y)$ is isomorphic to $\mathbb{Z}$.
There are two generators of $\Pi_1(y)$: $[\gamma_+]$ and $[\gamma_-]$, where $\gamma_{\pm} = \phi(C_{\pm})$ and $C_{\pm}$ is a circle in $D$ passing through the point $\varphi^{-1}(y)$ and enclosing the origin, and having positive (negative, respectively) orientation.
We will call the element $[\gamma_{\pm}] \in \Pi_1(y)$ the positive (the negative, respectively) generator of $\Pi_1(y)$.

The group $\Pi_1(y)$ acts on the fiber $Q_y = \pi_Q^{-1}(y)$ in the following way: for an element $[\gamma] \in \Pi_1(y)$ and $q \in Q_y$ we set $[\gamma]\cdot q = \bar q$ if the lift $\widetilde{\gamma}$ of $\gamma$ starting at $q$ terminates in $\bar q$.
This action is well defined, this means that if $\gamma_1$ and $\gamma_2$ represent the same element in $\Pi_1(y)$, then the lifts $\widetilde{\gamma}_1$ and $\widetilde{\gamma}_2$ starting at a same point $q$ terminate at a same point $\bar q$.

This action is a homomorphism of the group $\Pi_1(y)$ to the group of permutations of the fiber $Q_y$ and its image
is called the \emph{local monodromy group} of the branched section $Q$ at the point $y \in M'$.

\begin{stat}
The local monodromy group does not depend on a choice of the disk neighborhood $U(x)$.
\label{stat:1}
\end{stat}
\begin{proof}
Let $U(x)$ and $V(x)$ be two disk neighborhoods of $x$, and $y$ lies in $U(x) \cap V(x)$.
Then $\Pi^U_1(y) = \pi_1(U'(x), y) = \Pi^V_1(y) = \pi_1(V'(x), y)$ because for each class $[\gamma] \in \pi_1(V'(x), y)$ or $[\gamma] \in \pi_1(U'(x), y)$ one can find a representative $\gamma_1 \in [\gamma]$ which takes values in $U'(x) \cap V'(x)$.
\end{proof}

\begin{stat}
Let $\gamma$ be a loop in $U'(x)$ based at a point $y \in U'(x)$ such that its homotopic class represents the positive generator of $\Pi_1(y)$.
Then for each orbit $O$ of the local monodromy group action on $Q_y$ and each point $q \in O$, there exists a loop $\widetilde{\gamma}$ in $\pi_Q^{-1}(U'(x))$ based at $q$ which passes through each point of the orbit once and only once and such that $\pi_1(\pi_E) \left( [\widetilde{\gamma}] \right) = [\gamma]^k$, where $k$  is the number $\# O$ of elements of the orbit $O$.
Here $\pi_1(\pi_E) : \pi_1(\pi_Q^{-1}(U'(x)), q) \to \pi_1(U(x), y)$ is the homomorphism of the fundamental groups induced by the map $\pi_E$. 
\label{stat:2}
\end{stat}
\begin{proof}
First of all note that if we have an action of the group $\mathbb{Z}$ on a finite set, then we can enumerate elements of each orbit in such a way that the action of the group generator $1$ on this orbit is represented by the cycle $\sigma = (2, 3, \cdots, 1)$.
Indeed, let $O$ be an orbit of the action, and $q \in O$.
The map $F : \mathbb{Z}/H_q \to O$, $[g] \to g \cdot q$, where $H_q$ is the isotropy subgroup of the action, is an equivariant bijection.
The group $H_q$ is a cyclic group, this means that there exists $k \in \mathbb{Z}$, $k \ge 0$ such that $H_q = \left\{ k m \mid m \in \mathbb{Z} \right\}$, therefore $\mathbb{Z}/H_q = \left\{ [0], [1], \cdots, [k-1] \right\}$, and the action of the generator $1 \in \mathbb{Z}$ on $\mathbb{Z}/H_p$ is given exactly by the cycle $\sigma$.

Now, for a point $y \in U'(x)$, let $[\gamma]$, $\gamma : [0, 1] \to U'(x)$, be the positive generator of $\Pi_1(y)$.
Let us take an orbit $O$ of the local monodromy group action on $Q_y$ and a point $q \in O$.
Let $k$ be the number of elements of $O$.
As we have seen, the action of $[\gamma]$ on $O$ is represented by the cycle $\sigma$, this means we can enumerate the points of the orbit $O$ in such a way that $q_1 = q$, $[\gamma]q_1 = q_2$, \ldots, $[\gamma]q_{k-1} = q_k$, and $[\gamma]q_k = q_1 = q$.
Therefore, by the construction of the action of $\Pi_1(y)$ on $Q_y$, for the lift $\widetilde{\gamma}_1$ of $\gamma$ to $Q'$ such that $\widetilde{\gamma}_1(0) = q_1$ we have that $\widetilde{\gamma}_1(1)=q_2$, for the lift $\widetilde{\gamma}_2$ of $\gamma$ to $Q'$ such that $\widetilde{\gamma}_2(0) = q_2$ we have that $\widetilde{\gamma}_1(1)=q_3$, \ldots, and finally for the lift $\widetilde{\gamma}_k$ of $\gamma$ to $Q'$ such that $\widetilde{\gamma}_k(0) = q_k$ we have that $\widetilde{\gamma}_k(1)=q_k = q$.

What do we do in fact is that we take a point $q_1 = q \in Q_y$, then construct the points $q_2 = [\gamma] q_1$, $q_3 = [\gamma] q_2$, \ldots, up to $[\gamma]q_{k} = q_1$. 
Then the set $\left\{ q_1, q_2, \cdots, q_k \right\}$ is the orbit $O$ of the point $q$.  

It is clear that $\widetilde{\gamma} = \widetilde{\gamma}_k \cdot \widetilde{\gamma}_{k-1} \cdot \cdots \cdot\widetilde{\gamma}_1$, where ${\cdot}$ is the path composition, is a loop in $\pi_Q^{-1}(U'(x))$ at the point $q_1=q$, $\widetilde{\gamma}$ passes once and only once through each point of $O$, and $\pi_1(\pi_E) \left( [\widetilde{\gamma}] \right) = [\gamma]^k$. Thus $\widetilde{\gamma}$ is the required loop.
\end{proof}

\subsection{The index of isolated singular point}
\label{subsec:2_2}

Let $M$ be an oriented two-dimensional manifold.
Let $\xi = \left\{ \pi_E : E \to M \right\}$ be a locally trivial fiber bundle with standard fiber $F$ and a connected structure Lie group $G$.

Let $Q$ be a branched section of $\xi$ with singularity set $\Sigma$, and $x$ be an isolated point of $\Sigma$.
Take a disk neighborhood $U(x)$, and for a point $y \in U'(x)$, let $\mathcal{O}_y$ be the set of orbits of local monodromy group action on $Q_y$.
Take an orbit $O \in \mathcal{O}(y)$ and a point $q \in O$.
Let $[\gamma]$ be a positive generator of the group $\Pi_1(y)$, and $\widetilde{\gamma}$ the loop at $q$ constructed in Statement~\ref{stat:2}.

Let $\psi : \pi_E^{-1}(U(x)) \to U(x) \times F$ be a trivialization of the bundle $\xi$, and $p : \pi_E^{-1}(U(x)) \to F$ be the corresponding projection.
Then the element $[p \circ \widetilde\gamma] \in \pi_1(F)$ is called the \emph{index of the branched section $Q$ at the singular point $x$ corresponding to the orbit $O \in \mathcal{O}_y$}, call it $ind_x(Q; y, O)$.

\begin{stat}
$\hphantom{xxx}$
\begin{enumerate}
\item[a)]
The index $ind_x(Q; y, O)$ does not depend on a choice of the loop $\gamma : [0, 1] \to U(x')$ representing the positive generator of the group $\Pi_1(y)$.
\item[b)]
The index $ind_x(Q; y, O)$ does not depend on a trivialization.
\item[c)]
The index $ind_x(Q; y, O)$ does not depend on a choice of the disk neighborhood $U(x)$, this means that, if $U(x)$ and $V(x)$ are two disk neighborhoods of $x$, and $y \in U(x) \cap V(x)$, then the constructions of $ind_x(Q; y, O)$ performed for $U(x)$ and for $V(x)$ result in the same element in $\pi_1(F)$.  
\end{enumerate}
\label{stat:3}
\end{stat}
\begin{proof}
a) If $\gamma$ and $\mu$ are two representatives  of the positive generator of $\Pi_1(y)$, then $\gamma$ is homotopic to $\mu$, therefore $\gamma^k$ is homotopic to $\mu^k$, therefore the lift $\widetilde{\mu}$ of $\mu^k$ is homotopic to the lift $\widetilde{\gamma}$ of $\gamma^k$, hence $p\widetilde{\gamma}$ is homotopic to $p\widetilde{\mu}$.

b) This is because the gluing functions are homotopic to the identity as the structure group is connected.

c) This follows directly from the fact that $\Pi^U_1(y) = \Pi^V_1(y)$ (see the proof of Statement~\ref{stat:2}), and from a).
\end{proof}

\begin{ex}
Let us consider the trivial bundle $\xi = \left\{ \pi_E : E=\mathbb{C} \times \mathbb{C}^* \to M=\mathbb{C} \right\}$, where $\mathbb{C}^* = \mathbb{C} \setminus \left\{ 0 \right\}$ and $\pi_E(z, w) = z$. 
Let us take the subset $Q = \left\{ (z, w) \mid z^2 = w^3   \right\} \subset E$.

As $\rest{\pi_E}{Q}: Q \to M \setminus \left\{ z = 0 \right\}$ is a $3$-sheeted covering, we see that $Q$ is a $3$-sheeted branched section of the bundle $\xi$. 

It is clear that the singularity set of $Q$ is $\Sigma = \left\{ 0 \right\}$, so $Q$ has only one singular point $z = 0$ and this point is isolated. 
For the disk neighborhood of the isolated singular point $z = 0$ we take the entire $M = \mathbb{C}$.

Let us take $y = 1$, then $Q_y = \left\{ a=(1,1), b=(1, \varepsilon), c=(1, \varepsilon^2) \right\}$, where $\varepsilon = \exp(2\pi i/3)$.
The loop $\gamma(t) = \exp(2\pi i t)$, $t \in [0, 1]$, represents the positive generator of the group $\Pi(y=1)$, and
the lift $\widetilde{\gamma}_a$ of $\gamma$ which starts at the point $a = (1, 1)$ is given by $\widetilde{\gamma}_a(t) = (\exp(2\pi i t), \exp(\frac{4}{3} \pi i t)$, $t \in [0, 1]$.
Therefore $[\gamma]a = c$.
In the same manner one can prove that $[\gamma]b = a$, $[\gamma]c = b$.

Thus, the orbit $O$ of the point $a$ is $Q_{y=1} = \left\{ a, b, c \right\}$, and
for a representative of the class $[\widetilde{\gamma}]$ constructed in Statement~\ref{stat:2} we can take the loop $\widetilde{\gamma} = (\exp(6\pi i t), \exp(4 \pi i t))$ for $t \in [0, 1]$.

Therefore, the loop $p \widetilde{\gamma} : [0, 1] \to \mathbb{C}^*$ is given by
$p \widetilde{\gamma} = \exp(4 \pi i t))$ for $t \in [0, 1]$.
Hence
\begin{equation}
ind_0(Q; y=1, Q_{y=1}) = 2 \in \mathbb{Z} = \pi_1(\mathbb{C}^*).
\label{eq:3}
\end{equation}
\label{ex:4}
\end{ex}

Let us consider the finite set of elements of $\pi_1(F)$:
\begin{equation}
ind_x(Q; y) = \left\{ ind_x(Q; y, O) \mid O \in \mathcal{O}(y) \right\}.
\label{eq:4}
\end{equation}

\begin{stat}
The set $ind_x(Q; y)$ does not depend on $y \in U'(x)$.
\label{stat:4}
\end{stat}
\begin{proof}
Let $y, \bar y$ be two points in $U'(x)$.
Take a curve $\delta : [0, 1] \to U'(x)$ such that $\delta(0) = y$, $\delta(1) = \bar y$.

The curve $\delta$ defines the group isomorphism $\psi_\delta : \Pi_1(y) \to \Pi_1(\bar y)$, $[\gamma] \mapsto [\delta^{-1} \cdot \gamma \cdot \delta]$, where $\delta^{-1}(t) = \delta(1 - t)$.
Also, $\delta$ defines the bijection $\widetilde{\psi}_\delta : Q_y \to Q_{\bar y}$,
$q \in Q_y \mapsto \bar q \in Q_{\bar y}$, such that for the lift $\widetilde{\delta}$ of $\delta$ to $Q$ with $\widetilde{\delta}(0) = q$ we have that $\widetilde{\delta}(1) = \bar q$.
In addition, the bijection $\widetilde{\psi}_\delta$ is equivariant is sense that
$\widetilde{\psi}_\delta([\gamma]q) = \psi_\delta([\gamma])\widetilde{\psi}_\delta(q)$.

Therefore $\widetilde{\delta}$ induces a bijection $\alpha_\delta : \mathcal{O}(y) \to \mathcal{O}(\bar y)$, $O_q \mapsto O_{\widetilde{\psi}_\delta(q)}$, where $O_q$ is the $\Pi_1(y)$-orbit of the point $q \in Q_y$ and $O_{\widetilde{\psi}_\delta(q)}$ is the $\Pi_1(\bar y)$-orbit of the point  $\widetilde{\psi}_\delta(q) \in Q_{\bar y}$.

Let us prove that the loop $\widetilde{\gamma}$ which passes through the points of the orbit $O_q \in \mathcal{O}(y)$ constructed in Statement~\ref{stat:2} is homotopic in $\pi^{-1}(U'(x))$ to the corresponding loop of the orbit $O_{\widetilde{\psi}(q)} \in \mathcal{O}(\bar y)$.

Let $\gamma$ be a loop at $y \in U'(x)$ which represents the positive generator of $\Pi_1(y)$.
The loop $\widetilde{\gamma}$ constructed in Statement~\ref{stat:2} is homotopic to the lift of the loop $\gamma^k$ starting at a point $q \in Q_y$.
As the loop $\gamma^k$ is freely homotopic to the loop ${\bar \gamma}^k$, where $\bar\gamma = \delta^{-1}\gamma\delta$,
the lift $\widetilde{\gamma}$ is freely homotopic to the lift of ${\bar \gamma}^k$ starting at the point $\widetilde{\psi}_\delta(q)$, but this lift is in turn homotopic to the loop $\widetilde{\bar\gamma}$.

Therefore the loops $p \widetilde{\gamma}$ and $p \widetilde{\bar\gamma}$ are freely homotopic in $F$, therefore define the same element in $\pi_1(F)$.
Thus we have that $ind_x(Q; y, O) = ind_x(Q; \bar y, \alpha_\delta(O))$ for all $O \in \mathcal{O}(y)$, hence $ind_x(Q; y) = ind_x(Q; \bar y)$.
\end{proof}

\begin{cor}
The set $ind_x(Q)$ does not depend on the disk neighborhood $U(x)$, this means if $U_1(x)$ and $U_2(x)$ are disk neighborhoods of an isolated singular point $x \in \Sigma$, and $y_1 \in U'_1(x)$ and $y_2 \in U'_2(x)$, then the set $ind_x(Q; y_1)$ constructed via $U_1(x)$ and the set $ind_x(Q; y_2)$ constructed via $U_2(x)$ coincide.
\label{cor:1}
\end{cor}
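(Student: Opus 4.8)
The plan is to reduce the comparison of two disk neighborhoods to a comparison performed at a single common base point, and then assemble the conclusion from the independence results already proved.

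First I would observe that $U_1(x) \cap U_2(x)$ is an open neighborhood of $x$, so I can choose a point $y \in (U_1(x) \cap U_2(x)) \setminus \{x\}$, which then lies in both $U'_1(x)$ and $U'_2(x)$. Applying Statement~\ref{stat:4} inside $U_1(x)$ shows that the set $ind_x(Q; y_1)$ computed via $U_1(x)$ equals the set $ind_x(Q; y)$ computed via $U_1(x)$; applying Statement~\ref{stat:4} inside $U_2(x)$ shows that $ind_x(Q; y_2)$ computed via $U_2(x)$ equals $ind_x(Q; y)$ computed via $U_2(x)$. Hence it suffices to prove that $ind_x(Q; y)$ computed via $U_1(x)$ coincides with $ind_x(Q; y)$ computed via $U_2(x)$, for the single base point $y$ in the intersection.

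Next I would identify the orbit sets. By the argument in the proof of Statement~\ref{stat:1}, one has $\Pi^{U_1}_1(y) = \Pi^{U_2}_1(y)$ as subgroups, since every homotopy class admits a representative taking values in $U'_1(x) \cap U'_2(x)$; in particular the two positive generators agree and act by the same permutation of the fiber $Q_y$. Therefore the local monodromy group action on $Q_y$ is literally the same whether one uses $U_1(x)$ or $U_2(x)$, and the orbit decomposition $\mathcal{O}(y)$ is the same set of subsets of $Q_y$ in both cases. Then, for each fixed orbit $O \in \mathcal{O}(y)$, Statement~\ref{stat:3}c guarantees that $ind_x(Q; y, O)$ does not depend on the disk neighborhood, so it defines the same element of $\pi_1(F)$ for $U_1(x)$ and for $U_2(x)$. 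Ranging over the common indexing set $\mathcal{O}(y)$ gives the equality $ind_x(Q; y)$ via $U_1(x)$ $=$ $ind_x(Q; y)$ via $U_2(x)$, which combined with the two applications of Statement~\ref{stat:4} yields $ind_x(Q; y_1) = ind_x(Q; y_2)$.

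I expect the only delicate point to be the identification of the orbit sets across the two neighborhoods: one must verify that the two monodromy actions genuinely coincide as permutation actions on $Q_y$, not merely that the groups $\Pi^{U_1}_1(y)$ and $\Pi^{U_2}_1(y)$ are abstractly isomorphic to $\mathbb{Z}$, so that $\mathcal{O}(y)$ really is a single indexing set and Statement~\ref{stat:3}c can be invoked orbit by orbit. This is exactly what the representative-in-the-intersection argument from the proof of Statement~\ref{stat:1} supplies.
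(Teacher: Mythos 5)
Your proof is correct and is essentially the argument the paper intends: the paper's own proof is the single line ``Follows from Statement~\ref{stat:5}'' (evidently a mislabel for Statement~\ref{stat:4}), and your reduction to a common base point $y \in U'_1(x) \cap U'_2(x)$ via Statement~\ref{stat:4}, followed by the identification of the two monodromy actions on $Q_y$ (the argument of Statement~\ref{stat:1}) and the orbit-by-orbit application of Statement~\ref{stat:3}(c), is exactly the chain of results that citation compresses. Your closing remark about verifying that the two actions coincide as permutation actions, so that $\mathcal{O}(y)$ is a single indexing set, supplies precisely the detail the paper leaves implicit.
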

\begin{proof}
Follows from Statement~\ref{stat:5}
\end{proof}

From Statement~\ref{stat:4} it follows that we can give the following definition.
\begin{df}
Let $Q$ be a branched section of the bundle $\xi$.
The \emph{index of $Q$ at $x \in M$} is
\begin{equation}
ind_x(Q) = ind_x(Q; y),
\label{eq:5}
\end{equation}
where $y$ is a point of $U'(x)$, where $U(x)$ is a disk neighborhood of $x$.
\label{df:2}
\end{df}

Let us fix an element $a \in H^1(F)$.
The index of $Q$ at a point $x$ with respect to $a$ is
\begin{equation}
ind_x(Q; a) = \sum_{ O \in O(y)} \frac{1}{\# O}\langle a, ind_x(Q; y, O)\rangle
= \sum_{ O \in O(y)} \frac{1}{\# O} \int_{\gamma(Q; y, O)}\alpha,
\label{eq:6}
\end{equation}
where $\alpha \in \Omega^1(F)$ represents $a \in H^1(F)$ and $\gamma(Q; y, O)$ represents the class $ind_x(Q; y, O) \in \pi_1(F)$.

\begin{ex}
Let $M$ be a connected compact oriented manifold and let $\omega$ be a symmetric tensor of order $n$ over $M$. 
In Example~\ref{ex:2} we have constructed a branched section $Q \subset PTM$ determined by the  binary differential equation~\eqref{eq:22}. 

If we consider the covering $q: \mathbb{S}^1TM \to PTM$ given by $q((p, \vec{v}))=[\vec{v}]$, we see that $q\circ \pi : \mathbb{S}^1TM \to M$ is a fiber bundle and $q^{-1}(Q)$ is a $2n$-sheeted branched covering of the bundle $\mathbb{S}^1TM\to M$.
Let $p\in \Sigma$ be a singular point, $U'(p)$ be a neighborhood disk of $p$, and $\mathcal{O}_p=\{O_1, \cdots, O_r\}$ the set of the orbits of the action of $\pi_1(U'(p))$ on $\pi^{-1}(p)$.
From the equation \eqref{eq:6} it follows that the index of $q^{-1}(Q)$ at the singular point $p\in \Sigma$ with respect the cohomology class $a=[\frac{1}{2\pi}d\theta]\in H^1(\mathbb{S}^1)$, where $d\theta$ is the angular form on $\mathbb{S}^1$ is given by
\begin{equation}
ind_p(Q; a)=\sum_{i=1} ^{r}\frac{1}{2\pi k_i}\int_{\gamma_i} d\theta,
\label{eq:24}
\end{equation}
where $k_i$ is the number of elements of the orbit $O_i$, and $\gamma_i$ is the index of the point $p$ corresponding to the orbit $ O_i$.
Let us choose a frame $(e_1, e_2)$ along the curve $\gamma$, and we consider a unit vector field $X(t), 0\leq t\leq 1$ such that $\omega_{\gamma(t)}(X(t))$ around the curve $\gamma: I\to U'(p)$. If $\tilde{\theta}$ is the angle between $e_1$ and $X(0)$, we obtain that the index of $Q$ at the point $p$ with respect to the form $a$ can be also calculated in terms of this rotation angle by the formula
\begin{equation}
ind_p(Q, O_i, a)=\frac{\tilde{\theta}(2k_i)-\tilde{\theta}(0)}{2\pi k_i}.
\label{eq:25}
\end{equation}
Note that if the action of $\pi_1(U'(p))$ on $\pi^{-1}(p)$ is transitive, then the equation \eqref{eq:24} reduces to the following
\begin{equation}
ind_p(Q; a)=\frac{1}{4\pi n}\int_{\gamma} d\theta,
\label{eq:26}
\end{equation}
where $\gamma$ is the index of $p$ in $\pi^{-1}(p)$, and it is also true that
\begin{equation}
ind_p(Q, \pi^{-1}(p), a)=\frac{\tilde{\theta}(2k_i)-\tilde{\theta}(0)}{4n\pi}.
\label{eq:27}
\end{equation}
The equation \eqref{eq:27} coincides with the index of a binary differential $n$-form given in \cite{fukui2012}.

Now, we note that the index of $Q$ at a singular point $x$ seen as a singularity of the bundle $\pi: PTM\to M$ is twice the index of the same point as a singular point of the bundle $\pi\circ q: \mathbb{S}^1TM \to M$.
\end{ex}
\begin{rem}
This construction can be used to calculate an index of singular points of singular distributions over a two dimensional manifold $M$.
In [\cite{spivak_3_1999}, pages 218-223], the author gives another constructions of indexes of singular points of $1$-dimensional singular distributions and branched covering of two sheets defined by such a distributions.
\label{rem:3}
\end{rem}

\section{Resolution of a branched section}
\label{sec:3}

Let $M$ be a two-dimensional oriented manifold, and $\xi = \left\{ \pi_E : E \to M \right\}$ be a fiber bundle.
Let $\Sigma$ be a discrete subset of the manifold $M$.

\begin{df}
Let $Q$ be an $n$-sheeted branched section of the bundle $\xi$ with singularity set $\Sigma$, $M' = M \setminus \Sigma$, $E' = \pi^{-1}(M')$, and $Q' = Q \cap E'$.
A \emph{resolution of $Q$} is a map $\iota : S \to E$, where $S$ is an oriented two-dimensional manifold with boundary, such that
\begin{enumerate}
\item
$\iota(S) = Q$;
\item
$\pi = \pi_E \circ \iota : S \to M$ is surjective;
\item
the map $\iota$ is an embedding of $S' = S \setminus \partial S$ onto $Q'$.
\end{enumerate}
In case $M$ is compact, we assume $S$ to be compact, too.
\label{df:3}
\end{df}

\begin{rem}
From Definition~\ref{df:3} it follows that $\pi_E(Q) = M$ and $\pi_E(\partial S) = \Sigma$.
\label{rem:1}
\end{rem}

\begin{ex}
Let $M = \mathbb{R}^2$, $E = \mathbb{P}T \mathbb{R}^2$ and a branched section is the solution of the differential equation $x y dx^2 - (x^2 - y^2) dx dy - x y dy^2 = 0$.
As the discriminant of this equation is $(x^2 - y^2)^2 - 4 (x y)^2 = (x^2 + y^2)^2$, this differential equation is a binary differential equation (see Example~\ref{ex:2}).
This differential equation is represented in the form $(x dx + y dy)(y dx - x dy) = 0$, therefore its solution $Q$ consists of two $1$-dimensional distributions $L_1$ and $L_2$ on $\mathbb{R}^2$ given respectively by the equations $x dx + y dy = 0$ and $y dx - x dy = 0$. 
One can easily see that these equations determine sections with singularities $s_1$ and $s_2$ of the bundle $E$,  which admit resolutions (see \cite{arias_malakhaltsev2015}), call them $S_1$ and $S_2$, so the manifold $S_1 \sqcup S_2$ is a resolution of
the branched section $Q$.
\label{ex:5}
\end{ex}

\begin{ex}
Let $M = \mathbb{R}^2$, $E = PT \mathbb{R}^2$ and the branched section $Q$ is the solution of the binary differential equation
\begin{equation}
y dx^2 - 2 x dx dy - y dy^2 = 0.
\label{eq:7}
\end{equation}
The discriminant of equation~\eqref{eq:7} is $4(x^2 + y^2)$, therefore this equation has two real roots for all $(x, y)$ different from the origin, and at the origin all the coefficients vanish.
That is why, equation~\eqref{eq:7} is a binary differential equation (see Example~\ref{ex:2}).

The standard coordinates $(x, y)$ on $\mathbb{R}^2$ induce a trivialization of the bundle $\pi_E = E = PT\mathbb{R}^2 \to M = \mathbb{R}^2$, namely for the one-dimensional subspace $l \in PT_{(x, y)} \mathbb{R}^2$ spanned by a vector $p\partial_x + q\partial_y$, we assign the point $(x, y, [p : q]) \in \mathbb{R}^2 \times \mathbb{R}P^1$.
Thus, $PT\mathbb{R}^2 \cong \mathbb{R}^2 \times \mathbb{R}P^1$, and
\begin{equation}
Q = \left\{ (x, y, [p : q]) \in \mathbb{R}^2 \times \mathbb{R}P^1 \mid y p^2 - 2x p q - y q^2 = 0 \right\}
\label{eq:8}
\end{equation}
In this case
\begin{equation}
\Sigma = (0, 0), \quad Q' = \left\{ (x, y, [p : q]) \in Q \mid x^2 + y^2 > 0 \right\}, \quad M' = \mathbb{R}^2 \setminus \left\{ (0, 0) \right\}.
\label{eq:9}
\end{equation}
The projection $\pi_{PT\mathbb{R}^2} : PT\mathbb{R}^2 \to \mathbb{R}^2$ restricted to $Q'$ is a trivial (as a fiber bundle) double covering of $M'$.
Indeed, take the following open sets $U_1$ and $U_2$:
\begin{equation}
U_1 = M' \setminus (-\infty, 0) \times \left\{ 0 \right\}\text{ and } U_2 = M' \setminus (0, \infty) \times \left\{ 0 \right\}
\label{eq:10}
\end{equation}
It is clear that $M' = U_1 \cup U_2$. Also, at the points of $U_1$ we have $x + \sqrt{x^2 + y^2} > 0$, and at the points of $U_2$ we have $x - \sqrt{x^2 + y^2} > 0$.

Now let us take two sections of the bundle $\pi_{PT\mathbb{R}^2} : PT\mathbb{R}^2 \to \mathbb{R}^2$ defined on $M'$:
\begin{equation}
s_1: (x, y) \mapsto
\left\{
\begin{array}{l}
(x, y, [x + \sqrt{x^2 + y^2} : y]), \quad (x, y) \in U_1,
\\
(x, y, [-y : x - \sqrt{x^2 + y^2}]), \quad (x, y) \in U_2,
\end{array}
\right.
\label{eq:11}
\end{equation}
and
\begin{equation}
s_2: (x, y) \mapsto
\left\{
\begin{array}{l}
(x, y, [-y : x + \sqrt{x^2 + y^2}] ), \quad (x, y) \in U_1,
\\
(x, y, [x - \sqrt{x^2 + y^2} : y] ), \quad (x, y) \in U_2,
\end{array}
\right.
\label{eq:12}
\end{equation}
Note that over $U_1 \cap U_2$ there holds
\begin{equation}
[x + \sqrt{x^2 + y^2} : y] = [-y : x - \sqrt{x^2 + y^2}] \text{ and } [-y : x + \sqrt{x^2 + y^2}] = [x - \sqrt{x^2 + y^2} : y],
\label{eq:13}
\end{equation}
therefore the sections $s_1$ and $s_2$ are well defined.
One can easily prove that $s_i(M') \subset Q'$, $i = 1, 2$, and $s_1(M') \cap s_2(M') = \emptyset$.
Therefore $Q'$ is a trivial double covering of $M'$.

Now let us construct a resolution of the branched section $Q$.
Recall that $\mathbb{S}^1 = \left\{ (u, v) \mid u^2 + v^2 = 1 \right\}$, then
let take the diffeomorphism
\begin{equation}
f : \mathbb{S}^1 \to \mathbb{R}P^1, (u, v) \mapsto
\left\{
\begin{array}{l}
[u + 1 : v ], \quad u > -1,
\\[2pt]
[- v : u - 1], \quad u < 1,
\end{array}
\right.
\label{eq:14}
\end{equation}
and then the diffeomorphism $f$ ``rotated'' at the angle $\pi/2$ gives the diffeomorphism,
\begin{equation}
g : \mathbb{S}^1 \to \mathbb{R}P^1, (u, v) \mapsto
\left\{
\begin{array}{l}
[- v : u + 1], \quad u > -1,
\\[2pt]
[ u - 1 : v], \quad u < 1.
\end{array}
\right.
\label{eq:15}
\end{equation}

We take $S_1 = S_2 = \mathbb{R}_+ \times \mathbb{S}^1 = [0, \infty) \times \mathbb{S}^1$, and $S_1' = S_2' = (0, \infty)$.
We set $S = S_1 \sqcup S_2$, then $S' = S_1' \sqcup S_2'$.
Then $\iota : S \to \mathbb{R}^2 \times \mathbb{R}P^1$ is given by
\begin{equation}
\rest{\iota}{S_1}(r, (u, v)) = (ru, rv, f(u, v)),
\quad
\rest{\iota}{S_2}(r, (x, y)) = (ru, rv, g(u, v)).
\label{eq:16}
\end{equation}

One can easy see that $\rest{\iota}{S'_i} : S'_i \to Q'_i$, $i=1, 2$ is a diffeomorphism.
For example, any point $(x, y, [p : q]) \in V_{11}$, is the image of the point $(r, (u, v))$ under the map $\rest{\iota}{S_1}$, where
\begin{equation}
u = \frac{x}{\sqrt{x^2 + y^2}}, \quad v = \frac{y}{\sqrt{x^2 + y^2}}, \quad r = \sqrt{x^2 + y^2}.
\label{eq:17}
\end{equation}
\label{ex:6}
\end{ex}

\begin{ex}
As a generalization of Examples~\ref{ex:5} and~\ref{ex:6} one can take $n$ sections with singularities \cite{arias_malakhaltsev2015} of a bundle $\xi = \pi_E : E \to M$ which have the same set of singularities $\Sigma$, call them $s_i$, $i=\overline{1, n}$.
These sections define a branched section $Q$ of the bundle $\xi$: $Q = \left\{ s_i(x) \mid x \in M \setminus \Sigma \right\}$.
If $S_i$ is a resolution of $s_i$, then $S = \sqcup S_i$ is a resolution of $Q$.
\label{ex:7}
\end{ex}

\begin{ex}
Let us present an example of branched section, where the covering $\rest{\pi_Q}{Q'} : Q' \to M'$ is not trivial.
Take $M = \mathbb{R}^2 = \mathbb{C}$, $E = \mathbb{S}^1(\mathbb{C}) = \mathbb{C} \times \mathbb{S}^1$, the bundle of unit vectors over $M$, and let
\begin{equation}
Q = \left\{ (z, w) \in \mathbb{C} \times \mathbb{S}^1 \mid |z| w^2 = z\right\}.
\label{eq:18}
\end{equation}
Then $M' = \mathbb{C} \setminus \left\{ 0 \right\}$, $Q' = \left\{ (z, w) \mid w^2 = z/|z| \right\}$, and it is well known that $\rest{\pi_Q}{Q'} : Q' \to M'$ is a non trivial double covering.
Now let us take
\begin{equation}
S = [0, \infty) \times \mathbb{S}^1, \text{ and }
\iota: S \to E, \quad (r, e^{i\varphi}) \mapsto (r e^{2 i \varphi}, e^{i \varphi})
\label{eq:19}
\end{equation}
Then $S' = (0, \infty)$, and it is clear that the properties~(1)--(3) of Definition~\ref{df:3} hold true for $\iota$.
\label{ex:8}
\end{ex}

\begin{ex}
Let us present another example of branched section, where the covering $\rest{\pi_Q}{Q'} : Q' \to M'$ is not trivial.
Take $M = \mathbb{R}^2 = \mathbb{C}$, $E = PT\mathbb{R}^2 = \mathbb{R}^2 \times \mathbb{R}P^1 = \mathbb{C} \times \mathbb{R}P^1$, and let
\begin{equation}
Q = \left\{ (z, [ w ] \mid |w| = 1 \text{ and } |z|^2 w^4 = z^2\right\}
\label{eq:20}
\end{equation}
Then $M' = \mathbb{C} \setminus \left\{ 0 \right\}$, $Q' = \left\{ (z, w) \mid w^4 = z^2/|z|^2 \right\}$, and it is clear that $\rest{\pi_Q}{Q'} : Q' \to M'$ is a non trivial double covering.
Now let us take
\begin{equation}
S = [0, \infty) \times \mathbb{R}P^1, \text{ and }
\iota: S \to E, \quad (r, [ w ])\mapsto (r w^2, [ w ]),
\label{eq:21}
\end{equation}
where $|w|=1$.
 Then $S' = (0, \infty)$, and it is clear that the properties~(1)--(3) of Definition~\ref{df:3} hold true for $\iota$.
\label{ex:9}
\end{ex}

\begin{rem}
In Examples \ref{ex:8}--\ref{ex:9}, for each $x \in M$, the set $S_x$ is a discrete set if $x \in M \setminus \Sigma$, or is diffeomorphic to a circle $\mathbb{S}^1$ if $x \in \Sigma$.
\label{rem:2}
\end{rem}
Now let us consider a point $x \in \Sigma$.
Then, according to Definition~\ref{df:3}, $S_x = \pi^{-1}(x)$ consists of the connected components of the boundary $\partial S$.
Let us denote by $C(S_x)$ the set of connected components of $S_x$.
As $S_x$ is compact, the set $C(S_x)$ is finite, and each element of this set is diffeomorphic to a circle $\mathbb{S}^1$.

\begin{stat}
Let $C$ be a connected component of a boundary.
Then there exists a neighborhood $N(C)$ of $C$ and a diffeomorphism $f_C : N(C) \to \mathbb{S}^1 \times [0, 1]$ such that $f_C(C) = \mathbb{S}^1 \times \left\{ 0 \right\}$ and $U(x) = \pi(N(C)$ is a disk neighborhood of $x$.
For each $y \in U'(x)$, the set of orbits $\mathcal{O}_y$ consists of only one element.
In this cases the curve $\widehat \gamma$ corresponding to the orbit by Statement~\ref{stat:2} is a generator of the group $\pi_1(N(C))
\cong \mathbb{Z}$.
\label{stat:5}
\end{stat}
\begin{proof}
Indeed, $N(C) \setminus C$ is homeomorphic to a ring and $U'(x)$ is homeomorphic to a ring as well.
The map $N(C) \setminus C \to U'(x)$ induced by $\pi$ is a $n$-fold covering therefore $\pi_* : \pi_1(N(C)) \cong \mathbb{Z} \to \pi_1(U'(x))$ has the form $m \to km$.
At the same time $\pi_* ([\widetilde\gamma]) = \gamma^k$, thus $[\widetilde\gamma]$ is a generator of the group $\pi_1(N(C))$.
\end{proof}

\begin{cor}
The curve $\widetilde\gamma$ is homotopic in $N(C) \subset S$ to the curve $C \subset E_x$.
Therefore the curve $C$ represents $ind_x(Q, O)$.
\label{cor:2}
\end{cor}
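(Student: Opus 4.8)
The plan is to work inside the explicit collar model $N(C) \cong \mathbb{S}^1 \times [0,1]$ supplied by Statement~\ref{stat:5}, where both $\widetilde\gamma$ and $C$ appear as loops, and to compare their classes in the infinite cyclic group $\pi_1(N(C))$. First I would record that, since $N(C) \cong \mathbb{S}^1 \times [0,1]$ deformation retracts onto $C = f_C^{-1}(\mathbb{S}^1 \times \{0\})$ (via $(z,t) \mapsto (z,(1-s)t)$), the inclusion $C \hookrightarrow N(C)$ induces an isomorphism $\pi_1(C) \xrightarrow{\sim} \pi_1(N(C)) \cong \mathbb{Z}$, so that $[C]$ is a generator. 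By Statement~\ref{stat:5} the class $[\widetilde\gamma]$ is also a generator of $\pi_1(N(C))$. Two generators of $\mathbb{Z}$ differ only by a sign, so at this stage one already has either $\widetilde\gamma \simeq C$ or $\widetilde\gamma \simeq C^{-1}$ in $N(C)$.

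Next I would fix the sign by orientation. Both loops project under $\pi$ into $U'(x)$, and Statement~\ref{stat:5} gives $\pi_*[\widetilde\gamma] = [\gamma]^k$, where $[\gamma]$ is the \emph{positive} generator of $\Pi_1(y)$ and $k = \#O$. Orienting $C$ as the boundary of the oriented surface $S$, and tracing orientations through the covering $\pi : N(C)\setminus C \to U'(x)$ (which is compatible with the chosen orientations of $S$ and of $M$, so that it carries the positive boundary direction of $C$ to the positive generator of $\Pi_1(y)$), I would verify that $\pi_*[C] = [\gamma]^{k}$ with the \emph{same} positive power. Since $\pi_*$ is injective on $\pi_1(N(C)) \cong \mathbb{Z}$ (it is the multiplication-by-$k$ map of Statement~\ref{stat:5}), the equality $\pi_*[\widetilde\gamma] = \pi_*[C]$ forces $[\widetilde\gamma] = [C]$, and hence $\widetilde\gamma \simeq C$ in $N(C)$. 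This orientation bookkeeping is the main obstacle of the argument: the homotopy-theoretic part is formal, but one must genuinely check that the boundary orientation on $C$ and the lift-orientation on $\widetilde\gamma$ agree rather than being opposite, since otherwise $C$ would represent $-\,ind_x(Q;y,O)$.

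Finally, for the second assertion I would push the homotopy forward through the resolution. Note that $\widetilde\gamma$ lies in $S' = S \setminus \partial S$ (it lifts a loop in $U'(x)$, and $\iota$ embeds $S'$ onto $Q'$ by Definition~\ref{df:3}), while $C \subset \partial S$; the homotopy produced above runs inside $N(C) \subset S$ and joins them. Applying $\iota$ yields a free homotopy $\iota(\widetilde\gamma) \simeq \iota(C)$ inside $\iota(N(C)) \subset \pi_E^{-1}(U(x))$, and composing with the fiber projection $p$ of the trivialization $\psi$ (defined on all of $\pi_E^{-1}(U(x))$, since $\pi_E \circ \iota = \pi$ maps $N(C)$ onto $U(x)$) gives a free homotopy $p\circ\iota(\widetilde\gamma) \simeq p\circ\iota(C)$ in $F$. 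By the construction of the index preceding Statement~\ref{stat:3} we have $[p\circ\iota(\widetilde\gamma)] = ind_x(Q;y,O)$, and as $\pi_1(F)$ is abelian here free homotopy determines the class; therefore $[p\circ\iota(C)]$ represents the same element, i.e. the curve $C$ represents $ind_x(Q,O)$, as claimed.
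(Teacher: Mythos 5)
The paper gives no explicit proof of Corollary~\ref{cor:2}; it is presented as immediate from Statement~\ref{stat:5}, and the skeleton of your argument is exactly that implicit one: $[\widetilde\gamma]$ generates $\pi_1(N(C))\cong\mathbb{Z}$ by Statement~\ref{stat:5}, the collar retracts onto $C$ so $[C]$ is also a generator, hence the two curves are homotopic in $N(C)$ up to orientation; pushing the resulting free homotopy through $\iota$ and the trivialization projection $p$ then yields the second assertion (with the same free-versus-based homotopy elision that the paper itself makes in the proof of Statement~\ref{stat:4}). That part of your proposal is fine.

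The step you yourself single out as the crux --- fixing the sign --- fails as written. You claim ``both loops project under $\pi$ into $U'(x)$'' and compare $\pi_*[\widetilde\gamma]=[\gamma]^k$ with $\pi_*[C]=[\gamma]^k$. But $C$ is a connected component of $S_x=\pi^{-1}(x)$, so $\pi\circ C$ is the \emph{constant} loop at the singular point $x\notin U'(x)$; since $U(x)$ is a disk, $\pi_*[C]$ is the trivial class, and the covering $\pi: N(C)\setminus C\to U'(x)$ of Statement~\ref{stat:5} is not defined along $C$ at all, so there is nothing to ``trace orientations through'' on $C$ itself. To make your comparison meaningful you must first replace $C$ by a parallel copy $C_\epsilon=f_C^{-1}\left(\mathbb{S}^1\times\{\epsilon\}\right)$, which is homotopic to $C$ in the collar and does lie in $N(C)\setminus C$, and compare $\pi_*[C_\epsilon]$ with $[\gamma]^k$. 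Even after this repair, the equality you propose to ``verify'' is not determined by the paper's hypotheses: Definition~\ref{df:3} imposes no orientation compatibility between $S$ and $M$, and under the natural choices (orient $S$ so that $\pi|_{S'}$ is orientation-preserving, outward-normal-first boundary orientation) the curve $C$ behaves like the \emph{inner} boundary circle of an annulus, so the boundary-oriented $C_\epsilon$ projects to clockwise loops around $x$, i.e.\ $\pi_*[C_\epsilon]=[\gamma]^{-k}$ --- the opposite sign to the one you assert, as one can check explicitly in Example~\ref{ex:8} with $\iota(r,e^{i\varphi})=(re^{2i\varphi},e^{i\varphi})$. So your argument, once repaired, proves the homotopy only up to orientation of $C$ (which is all the paper's own implicit argument gives); the sign bookkeeping you flag as the main obstacle is both ill-posed at the point where you run it and not recoverable from the stated definitions without adding an orientation convention to Definition~\ref{df:3}.
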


\section{Connection and the Gauss-Bonnet theorem}
\label{sec:4}
Let $\xi = (\pi_E : E \to M)$ be a locally trivial fiber bundle with standard fiber $F$ and structure group $G$.
Assume that $G$ is a connected Lie group. 

Let $(U, \psi: \pi^{-1}(U) \to U \times F)$ be a chart of the atlas of $\xi$.
Let
\begin{equation}
\eta = p_F \circ \psi: \pi^{-1}(U) \to F,
\label{eq:6100}
\end{equation}
where $p_F : U \times F \to F$ is the canonical projection onto $F$.
For each $x \in U$ the map $\eta$ restricted to $F_x = \pi^{-1}(x)$ induces a diffeomorphism $\eta_x : F_x \to F$, and let $i_x : F \to F_x$ be the inverse of $\eta_x$.
Note that
if we take another chart $(U', \psi': \pi^{-1}(U') \to U' \times F)$, and $\eta' : \pi^{-1}(U') \to F$ is the corresponding map, then on $\pi^{-1}(U \bigcap U')$ we have that
\begin{equation}
\psi' \circ \psi^{-1} : (U \cap U') \times F \to (U \cap U') \times F, \quad (x, y) \mapsto (x, g(x) y),
\label{eq:7100}
\end{equation}
where $g : U \cap U' \to G$ is the gluing map of the charts.
Now, for any $x \in U \cap U'$, we have $\eta'_x \circ \eta_x^{-1} (y) = g(x) y$, and, as $G$ is connected, $\eta'_x \circ \eta_x^{-1} : F \to F$ is homotopic to the identity map.
This means that for any $x \in m$ we have well defined isomorphisms of the homotopy and (co)homology groups:
\begin{equation}
\begin{array}{l}
\pi_*(\eta_x) : \pi_*(F_x) \to \pi_*(F),
\\[5pt]
H_*(\eta_x) : H_*(F_x) \to H_*(F), \quad H^*(\eta_x) : H^*(F) \to H^*(F_x),
\end{array}
\label{eq:8100}
\end{equation}
which do not depend on the chart.

In \cite{arias_malakhaltsev2015}, for a locally trivial bundle  with standard fiber $F$ and  structure Lie group $G$, we have proved the following statement (\cite{arias_malakhaltsev2015}, Statement~1):
\begin{stat}
Let $a \in H^1(F)$ and $H$ be a connection in $E$.
There exists a $1$-form $\alpha \in \Omega^1(E)$ such that
\begin{enumerate}
\item
$\left.\alpha\right|_H = 0$;
\item
for each $x \in M$, $d i_x^* \alpha = 0$ and $[ i_x^* \alpha ] = H^1(\eta_x) a$.
\end{enumerate}
\label{stat:7}
\end{stat}
The decomposition $TE = H \oplus V$ gives a bicomplex representation of the complex $\Omega(E)$,
then the form $\alpha$ lies in $\Omega^{(0, 1)}(E)$ and
$d \alpha = \theta_{(1, 1)} + \theta_{(2, 0)}$, where $\theta_{(1, 1)} \in \Omega^{(1, 1)}$ and$\theta_{(2, 0)} \in \Omega^{(2, 0)}$, and
\begin{equation}
\theta_{(1, 1)}(X, Y) = (L_X \alpha)(Y), \quad \theta_{(2, 0)} = \widetilde{\alpha}(\Omega).
\label{eq:28}
\end{equation}
where $L_X$ is the Lie derivative with respect to the vector field $X$, and $\Omega$ is the curvature form of the connection $H$ (for details see \cite{arias_malakhaltsev2015}, Section~3).

Now let $Q$ be a branched section of the bundle $\xi$ which admits a resolution $\iota : S \to E$ (see Definition~\ref{df:3}).
Let us fix an element $a \in H^1(F)$, and let $\alpha \in \Omega^1(E)$ be the corresponding $1$-form (see Statement~\ref{stat:7}).
Then, by the Stokes theorem we have
\begin{equation}
\int_{\partial S} \iota^*\alpha = \int_S \iota^*d\alpha.
\label{eq:29}
\end{equation}

By Remark~\ref{rem:1} we have that $\pi_E(\partial S) = \Sigma$.
For $x \in \Sigma$, let $C(S_x)$ be the set of connected components of $\pi_E^{-1}(x)$.

From Corollary~\ref{cor:2}, it follows that, for $C \in C(S_x)$, we have
\begin{equation}
\int_C \alpha = \int_{\gamma(Q; y, O(C)} i_x^*\alpha,
\label{eq:30}
\end{equation}
where $\gamma(Q; y, O(C))$ represents the class $ind_x(Q; y, O(C)) \in \pi_1(F)$, and $O(C)$ is the orbit of the local monodromy group corresponding to $C$.
Therefore, from \eqref{eq:6} we have that
\begin{equation}
ind_x(Q; a) = \sum\limits_{C \in C(S_x)} \frac{1}{\# O(C)}\int_C \alpha.
\label{eq:31}
\end{equation}

If all the orbits of the local monodromy group corresponding to the components $C \in C(S_x)$ have the same number of elements $N(x)$, then
\begin{equation}
\int_{\partial S} \iota^*\alpha =\sum\limits_{x \in \Sigma} \sum\limits_{C \in C(S_x)} \int_C \alpha =
\sum\limits_{x \in \Sigma} N(x)\ ind_x(Q; a)
\label{eq:32}
\end{equation}

Thus we get the following theorem
\begin{thm}[Gauss-Bonnet-Hopf-Poincaré for branched sections]
If, for any $x \in \Sigma$, all the orbits of the local monodromy group corresponding to the components $C \in C(S_x)$ have the same number of elements $N(x)$, then
\begin{equation*}
\int_S \iota^*\theta_{(1, 1)} + \iota^*\theta_{(2, 0)} = \sum\limits_{x \in \pi(\partial S)} N(x)\ ind_x(Q).
\end{equation*}
\label{thm:1}
\end{thm}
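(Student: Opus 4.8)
The plan is to obtain the identity as a direct consequence of Stokes' theorem applied to the pulled-back form $\iota^*\alpha$, once the boundary integral has been re-expressed in terms of the indices. First I would start from the Stokes identity \eqref{eq:29}, namely $\int_{\partial S}\iota^*\alpha = \int_S \iota^* d\alpha$, which is legitimate because $S$ is a compact oriented surface with boundary and $\iota^*\alpha$ is a smooth $1$-form on $S$. Using the bicomplex decomposition $d\alpha = \theta_{(1,1)} + \theta_{(2,0)}$ recorded in \eqref{eq:28}, the right-hand side becomes $\int_S \iota^*\theta_{(1,1)} + \iota^*\theta_{(2,0)}$, which is exactly the left-hand side of the asserted formula. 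Hence everything reduces to evaluating the boundary integral $\int_{\partial S}\iota^*\alpha$.

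To evaluate the boundary integral, I would use Remark~\ref{rem:1} to identify $\pi_E(\partial S) = \Sigma$ and decompose the boundary as the disjoint union $\partial S = \bigsqcup_{x \in \Sigma} S_x$, where each $S_x = \pi_E^{-1}(x)\cap\partial S$ splits further into its connected components $C \in C(S_x)$, each diffeomorphic to a circle. This gives $\int_{\partial S}\iota^*\alpha = \sum_{x\in\Sigma}\sum_{C\in C(S_x)}\int_C \alpha$. The key input here is Corollary~\ref{cor:2}, which asserts that each boundary circle $C$ is homotopic, inside a neighborhood $N(C)\subset S$, to the loop $\widetilde{\gamma}$ of Statement~\ref{stat:2}; consequently $\int_C\alpha = \int_{\gamma(Q;y,O(C))} i_x^*\alpha$ as in \eqref{eq:30}, so that $\int_C\alpha$ computes the contribution $\langle a, ind_x(Q;y,O(C))\rangle$ of the orbit $O(C)$ to the index.

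Finally, I would invoke the hypothesis that, for each fixed $x\in\Sigma$, all orbits $O(C)$ with $C\in C(S_x)$ have the same cardinality $N(x)$. Comparing with the definition \eqref{eq:6}, in which $ind_x(Q;a)=\sum_{O}\frac{1}{\#O}\langle a, ind_x(Q;y,O)\rangle$, the common normalizing factor $\frac{1}{N(x)}$ can be pulled out of the sum, whence $\sum_{C\in C(S_x)}\int_C\alpha = N(x)\,ind_x(Q;a)$; here $ind_x(Q)$ in the statement abbreviates $ind_x(Q;a)$ for the fixed class $a$. Summing over $x\in\Sigma = \pi(\partial S)$ yields precisely \eqref{eq:32}, and combining this with the Stokes computation of the first paragraph gives the theorem.

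I expect the main obstacle to be the orientation bookkeeping: one must check that the orientation induced on each boundary circle $C$ by the orientation of $S$ matches the orientation of the representative loop $\gamma(Q;y,O(C))$ used to define $ind_x(Q;y,O(C))$, so that \eqref{eq:30} holds with the correct sign rather than its negative. This is exactly the role of the positive-generator convention fixed before Statement~\ref{stat:2}, together with the homotopy identification of Corollary~\ref{cor:2}; keeping the signs consistent across all components, and confirming that the equal-cardinality hypothesis is genuinely what allows $N(x)$ to be factored out of the weighted sum \eqref{eq:6}, is where the care lies. The differential-geometric content — that $\iota^*d\alpha$ equals the curvature-type $2$-form $\iota^*\bigl(\theta_{(1,1)}+\theta_{(2,0)}\bigr)$ — is already packaged in Statement~\ref{stat:7} and \eqref{eq:28}, so no further computation with the connection is required.
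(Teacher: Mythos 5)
Your proposal follows the paper's own argument essentially verbatim: Stokes' theorem \eqref{eq:29} applied to $\iota^*\alpha$, the decomposition $d\alpha = \theta_{(1,1)} + \theta_{(2,0)}$ from \eqref{eq:28}, the splitting of $\partial S$ into the circles $C \in C(S_x)$ over $x \in \Sigma$, Corollary~\ref{cor:2} to convert each $\int_C \alpha$ into the index pairing \eqref{eq:30}, and the equal-cardinality hypothesis to pull the factor $\frac{1}{N(x)}$ out of \eqref{eq:6} and obtain \eqref{eq:32}. The orientation bookkeeping you flag is indeed what the positive-generator convention and Corollary~\ref{cor:2} are for, so your argument is complete and coincides with the paper's.
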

\bigskip


\begin{thebibliography}{1}

\bibitem{arias_malakhaltsev2015}
Arias~F. A. and M.~A. Malakhaltsev.
\newblock A generalización of the Gauss-Bonnet and Hopf-Poincaré theorems.
\newblock {\em ArXiv:1510.01395 [MathDG] 5 Oct 2015}, 2015.

\bibitem{dubrovin_novikov_fomenko1985}
B.~A. Dubrovin, A.~T. Fomenko, and S.~P. Novikov.
\newblock {\em Modern Geometry- Methods and Applications Part II. The Geometry
and Topology of Manifolds}.
\newblock Springer-Verlag, 1985.

\bibitem{fukui2012}
T.~Fukui and J.~J. Nu{\~{n}}o-Ballesteros.
\newblock Isolated singularities of binary differential equations of degree
{\textdollar}n{\textdollar}.
\newblock {\em Publicacions Matem{\`{a}}tiques}, 56:65--89, jan 2012.

\bibitem{bruce1995}
J~W Bruce and F~Tari.
\newblock On binary differential equations.
\newblock {\em Nonlinearity}, 8(2):255--271, mar 1995.

\bibitem{spivak_3_1999}
Michael {Spivak}.
\newblock {\em {A comprehensive introduction to differential geometry. Vol.
1-5. 3rd ed. with corrections.}}, volume~3.
\newblock Houston, TX: Publish or Perish, 3rd ed. with corrections edition,
1999.
\end{thebibliography}

\end{document}